\newcommand{\on}{\operatorname}
\newcommand{\Don}{\on{Don}}
\newcommand{\pr}{\on{pr}}
\theoremstyle{plain}
\newtheorem{thm}{Theorem}[section]
\newtheorem*{thm*}{Theorem}
\newaliascnt{prop}{thm}
    \newtheorem{prop}[prop]{Proposition}
\newaliascnt{lem}{thm}
    \newtheorem{lem}[lem]{Lemma}
\newaliascnt{cor}{thm}
\newaliascnt{conj}{thm}
\theoremstyle{definition}
\newaliascnt{eg}{thm}
    \newtheorem{eg}[eg]{Example}
\newaliascnt{defn}{thm}
    \newtheorem{defn}[defn]{Definition}
\newaliascnt{rmk}{thm}
    \newtheorem{rmk}[rmk]{Remark}
\numberwithin{equation}{section}
\newcommand{\R}{\mathbb{R}}
\newcommand{\C}{\mathbb{C}}
\newcommand{\D}{\mathfrak{D}}
\newcommand{\U}{\mathfrak{U}}
\pgfplotsset{compat=1.17}
\begin{document}

\title{The focus-focus addition graph is immersed}

\author{Mohammed Abouzaid, Nathaniel Bottman, and Yunpeng Niu}
\date{}

\begin{abstract}
For a symplectic 4-manifold $M$ equipped with a singular Lagrangian fibration with a section, the natural fiberwise addition given by the local Hamiltonian flow is well-defined on the regular points.
We prove, in the case that the singularities are of focus-focus type, that the closure of the corresponding addition graph is the image of a Lagrangian immersion in $(M \times M)^- \times M$, and we study its geometry.
Our main motivation for this result is the construction of a symmetric monoidal structure on the Fukaya category of such a manifold.
\end{abstract}

\maketitle

\setcounter{tocdepth}{1}
\tableofcontents

\section{Introduction}

Homological Mirror Symmetry \cite{kon95} predicts an equivalence between the Fukaya category of a symplectic manifold and the derived category of coherent sheaves on a mirror algebraic variety, when the latter exists.
Under this equivalence, structures on the derived category of coherent sheaves give rise to structures on the Fukaya category.
In particular, as the derived category of coherent sheaves on an algebraic variety is equipped with a tensor product, there is a corresponding monoidal structure on the Fukaya category of the mirror symplectic manifold.
We intend to give a geometric construction of this monoidal structure in a class of examples motivated by SYZ mirror symmetry \cite{SYZ96}.
This project will comprise the current paper, as well as \cite{ABN24}, \cite{Niu24}, and possibly additional work.
In the current paper, we take a first step, by studying the geometry of the Lagrangian correspondence at the heart of this construction.

A smooth Lagrangian fibration is locally a completely integrable Hamiltonian system after a choice of local trivialization on the base.
The Hamiltonian flows induce a fiberwise addition when choosing a local Lagrangian section as the origin for time coordinates.
For a symplectic manifold $(M,\omega)$ equipped with a smooth Lagrangian fibration with (Lagrangian) section, there is thus a natural group structure on each torus fiber defined independent of coordinates.
In 2010, Subotic \cite{Sub10} proved that the corresponding addition graph 
\begin{equation}
    \Gamma\coloneqq\{(x,y,z)\in M^3\;\;|\;\;z=x+y\}
\end{equation}
is Lagrangian with respect to the symplectic form $(-\omega)\oplus(-\omega)\oplus\omega$, and is thus a smooth Lagrangian correspondence from $M \times M$ to $M$.

In \cite[Theorem 1.0.1]{Sub10}, Subotic used Wehrheim--Woodward's quilted Floer theory to associate to $\Gamma$ a monoidal structure on the extended Donaldson--Fukaya category of $M$, assuming that it satisfies geometric hypotheses sufficient for all the objects below to be well-defined.
Specifically, Subotic defined a tensor product $\hat\otimes$ as the composition
\begin{align}
\Don^\#(M)\times\Don^\#(M)
\stackrel{i}{\longrightarrow}
\Don^\#(M\times M)
\stackrel{\Phi^\#_\Gamma}{\longrightarrow}
\Don^\#(M),
\end{align}
where $i$ is a fully faithful functor that extends the product functor on $\Don(M)$, and $\Phi^\#_\Gamma$ is the functor associated to the correspondence $\Gamma$ as in \cite{WW:functoriality}.
In the case that $M$ is a $2$-torus, Subotic checked that this monoidal structure agrees with the one on the derived category of the mirror elliptic curves, under the mirror equivalence constructed in \cite{Zas98}.

In this paper, we generalize Subotic's Lagrangian correspondence construction to the first singular case.
We work with a symplectic 4-manifold $(M,\omega)$ equipped with a smooth proper map $\pi\colon M \rightarrow B$ onto a smooth surface $B$, with connected fibers, such that $\pi$ is everywhere a submersion except for finitely-many points that lie on pairwise distinct fibers.
We require the fibration $\pi$ to be Lagrangian with a (Lagrangian) section, i.e.\ we assume that $\omega$ restricts to zero on the regular part of each fiber and there exists a smooth section $\sigma\colon B \rightarrow M$ such that $\sigma^*\omega=0$.
Finally, we require the singular points to be of focus-focus type (see \autoref{subsec:focus-focus}).
This is the symplectic version of $A_1$-type singular fibers in elliptic fibrations (in fact they are locally smooth equivalent), where the regular torus fiber degenerates to a pinched torus.
This is a special case of almost toric symplectic manifold introduced by Symington in \cite{Symington01}.

We prove in this case that the addition, which is well-defined on the regular points, can be extended to pairs of points, assuming at most one of which is a singular point.
Such addition gives as before an addition graph $\Gamma\subset (M\times M)^-\times M$.
The main result in this paper is the following.
\begin{thm*}[\autoref{thm:Gamma-bar_is_immersed_Lagrangian}]
The closure $\bar{\Gamma}$ of the addition graph $\Gamma$ is an immersed Lagrangian submanifold of $(M \times M)^- \times M$.
It has transversal double points at each triple $(s,s,s)$, where $s$ is a critical point.
\end{thm*}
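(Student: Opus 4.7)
My plan is to localize to a neighborhood of each triple $(s,s,s)$, apply the focus-focus normal form, and then explicitly parametrize $\bar\Gamma$ as a Lagrangian immersion with the predicted transversal double point.

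By Subotic's theorem, $\Gamma$ is already a smooth Lagrangian on the complement of the critical triples, so the work to be done is local near each $(s,s,s)$. I invoke the Eliasson--Vũ Ngọc normal form to identify a neighborhood of $s$ in $M$ symplectomorphically with a neighborhood of $0 \in \C^2$
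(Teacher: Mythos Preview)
Your outline matches the paper's strategy, but two points need more care. First, $\bar\Gamma\setminus\Gamma$ is not just the isolated point $(s,s,s)$: since $s+s$ is undefined, every triple $(s,s,z)$ with $z$ in the singular fiber $F_s$ lies in the closure but not in $\Gamma$, so the locus where Subotic's result does not already give smoothness is the whole copy of $F_s$ sitting inside $\{s\}\times\{s\}\times M$. Away from $(s,s,s)$ one can in fact handle this by projecting to the first and third factors and writing $y=z-x$, which is a smooth function of $(x,z)\in M\times_B M$ near $(s,z)$ for any regular $z\in F_s$; but this reduction is missing from your sketch, and working only ``near each $(s,s,s)$'' in $M^3$ does not cover these points.

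Second, and more substantively, the Eliasson normal form you invoke is a statement about a neighborhood of the critical \emph{point} $s$; it does not see the Lagrangian section $\sigma$ or the period lattice of the nearby regular fibers, and both of these enter the very definition of fiberwise addition. To write down explicit parametrizations of $\bar\Gamma$ near $(s,s,s)$ and to know that they actually exhaust the closure there, you need the \emph{semi-global} V\~u Ng\d{o}c classification, which identifies a neighborhood of the entire singular fiber, together with its section, with an explicit model $X_S$. In that model the paper produces two polynomial charts $F_1,F_2\colon\C^3\to X_S^3$ from the two addition formulae (one for each of the glued sections $\sigma_1,\sigma_2$), shows that together with a few auxiliary charts they cover all of $\bar\Gamma_S$, and checks $\on{Im}\,dF_1|_0+\on{Im}\,dF_2|_0=T_{(s,s,s)}X_S^3$ to get the transversal double point. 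The Lagrangian claim then follows, as you say, by taking the closure of Subotic's result.
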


\noindent
Moreover, $\bar\Gamma$ is symmetric and associative, where ``symmetric'' means that $\tau(\bar\Gamma) = \bar\Gamma$ for $\tau$ the map sending $(x,y,z)$ to $(y,x,z)$, and where ``associative'' means that the geometric compositions $\bar\Gamma \circ (\bar\Gamma\times\Delta_M)$ and $\bar\Gamma\circ(\Delta_M\times\bar\Gamma)$ agree.
Indeed, these statements are evident for $\Gamma$, which implies the statements for $\bar\Gamma$.

In subsequent papers \cite{Niu24} and \cite{ABN24}, we will use this immersed Lagrangian correspondence to construct a monoidal structure on the Fukaya category of $M$ in various cases and derive various properties of the functor.

\subsection{Outline}

The arrangement of this paper is the following.
In \autoref{subsec:focus-focus}, we review the definition of focus-focus singularities and discuss various basic results concerning them.
This includes a key technical result \autoref{thm:focus-focus_classification}, which we will use crucially for the proof of the main theorem.
In \autoref{sec:group_structure}, we analyze the group structure in local charts and explain how it resembles the case for $A_1$-singularities of elliptic fibrations.
In \autoref{sec:addition_graph}, we first prove the main theorem \autoref{thm:Gamma-bar_is_immersed_Lagrangian} in \autoref{subsec:Gamma_is_immersed}, then in \autoref{subsec:geometry_of_correspondence} we study the local geometric features of this immersed submanifold and its minimal resolution.

\subsection*{Acknowledgements}

M.A.\ is supported by NSF award DMS-2103805.

N.B.\ thanks the Max Planck Society for its support.

Y.N.\ was partially supported by the Simons Foundation International.
She is grateful to her advisor, Kenji Fukaya, for suggesting this interesting problem as part of her thesis project and for his support, guidance, and various useful conversations during the process. 

\section{Background}
\label{sec:background}

\subsection{Focus-focus singularities}
\label{subsec:focus-focus}

In this section, we review several relevant results concerning focus-focus singularities.
Our primary reference is \cite{Evans22}.

Consider the following Hamiltonian system: 
\begin{align} \label{eq:model_Hamiltonian_focus-focus}
    (\C^2,\omega_0=dp_1\wedge dq_1+dp_2\wedge dq_2) &\stackrel{H}{\longrightarrow} \C, \\
    (p,q) &\mapsto -\bar{p}q=H_1+iH_2. \nonumber
\end{align}
This map has a critical point at $(0,0)$.
The Hamiltonian flows are 
\begin{align}
\label{eq:hamiltonian_flows}
    \phi_{H_1}^t\colon (p,q) &\mapsto(e^tp,e^{-t}q), \\
    \phi_{H_2}^t\colon (p,q) &\mapsto (e^{it}p,e^{it}q). \nonumber
\end{align}
The Hamiltonian flow of $H_2$ is always periodic and the period stays constant.
To see the Hamiltonian flow of $H_1$, we can consider the projection of the above map to the $(|p|,|q|)$-plane, exhibited in \autoref{fig:focus-focus-sing}.

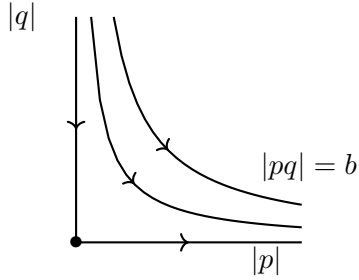
\begin{figure}
\begin{tikzpicture}[decoration={
    markings,
    mark=at position 0.5 with {\arrow{>}}}]
\node[label=left:$ \left| p\right|$] at (3,-0.25) {};
\node[label=left:$ \left| q\right|$] at (-0.25,3) {};
\draw[thick,postaction={decorate}] (0,3) -- (0,0);
\draw[thick,postaction={decorate}] (0,0) -- (3,0);
\draw[thick,domain=0.5:3.0,postaction={decorate}] plot (\x,{1.5/(\x)});
\draw[thick,domain=0.2:3.0,postaction={decorate}] plot (\x,{0.6/(\x)});
\node at (0,0) {\(\bullet\)};
\node[label=left:${\left| pq\right|=b}$] at (4,1) {};
\end{tikzpicture}
\caption{
\label{fig:focus-focus-sing}
Hamiltonian flow of $H_1$
}
\end{figure}

On a regular fiber (i.e.\ at a point where $-\bar{p}q=b \neq 0$), the $H_1$-flow is periodic.
On the singular fiber $pq=0$, the $H_1$-flow has three disjoint orbits: the positive $|p|$-axis (i.e.\ $\{|p| > 0\}$), the positive $|q|$-axis, and the singular point $(0,0)$.
This is the \textbf{standard focus-focus system}.

If $M^4$ is a symplectic 4-manifold and $B^2$ is a smooth surface, we say a map  $\pi\colon (M^4,\omega) \to B^2$ has a \textbf{focus-focus singularity at $x$} if there exists a Darboux neighbourhood of $x$ on which $\pi$ is given by \autoref{eq:model_Hamiltonian_focus-focus}, with respect to a choice of coordinates on $B^2$.
Note that this can also be regarded as the local model for a Lefschetz fibration.
Since we assume that our Lagrangian fibration has compact and connected fibers, the singular fiber will have to be a pinched torus if it has only one singular point and the singular point is of focus-focus type.
We thus refer to such a singular fiber as being of \textbf{symplectic $A_1$-type}.

Each singular fiber $F$ admits a neighborhood in $M$ which does not contain any other singular fiber.
Choosing a local trivialization on $B$ around $b\coloneqq\pi(F)$, and shrinking the neighborhood if necessary, we can get an integrable Hamiltonian system with only one singular point and of focus-focus type.
We call the obtained Hamiltonian system a \textbf{focus-focus neighborhood} when the projection to $B$ is a disc.
Any two focus-focus neighborhood are fiberwise diffeomorphic, thus there are no nontrivial differential invariants.
However, there is a nontrivial symplectic invariant: we can associated to any focus-focus neighborhood a smooth function $S$ defined in a neighborhood of the critical value $b$, which vanishes at $b$, and for which one has the following classification result by V\~u Ng\d{o}c \cite{Ngoc02}.

\begin{thm}
\label{thm:focus-focus_classification}
The germ $(S)_\infty$ of $S$ at the origin classifies the focus-focus neighborhood up to fibered symplectomorphism.
\end{thm}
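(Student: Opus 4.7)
The plan is to follow V\~u Ng\d{o}c's original approach, based on action-angle coordinates together with a regularized action. Recall that the invariant $S$ is constructed as follows: over the punctured base $B\setminus\{b\}$ the regular fibers are $2$-tori, and integrating a primitive of $\omega$ around a distinguished ``vanishing'' cycle produces an action integral $A(b')$. Because of the focus-focus monodromy around $b$, this integral has an explicit logarithmic singularity of the form $b'\log b'$ at the critical value, determined solely by the standard model \autoref{eq:model_Hamiltonian_focus-focus}; subtracting off that universal term yields the smooth function $S$ with $S(b)=0$. That a fibered symplectomorphism preserves the germ $(S)_\infty$ is then essentially tautological, since actions are symplectic invariants and the logarithmic part is determined by the local model; this handles the ``easy'' direction.

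For the converse I would argue in two stages. Assume two focus-focus systems $\pi_1,\pi_2$ have the same infinite jet $(S)_\infty$ at the origin. Away from the singular fiber, each system carries action-angle coordinates, and matching the regularized actions (which agree up to a flat function on $B$) produces a fiberwise symplectomorphism $\psi_0$ between the complements of the singular fibers. Near the critical point, invoke V\~u Ng\d{o}c's symplectic local normal form to trivialize both systems as the standard model, so that the two systems differ only by a germ of fiber-preserving symplectomorphism of the standard Hamiltonian in \autoref{eq:model_Hamiltonian_focus-focus}. The remaining task is to patch $\psi_0$ with this local trivialization into a single fibered symplectomorphism of neighborhoods of the whole singular fiber.

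The heart of the proof, and the step I expect to be the main obstacle, is precisely this extension across the singular fiber. My approach would be a Moser-type deformation: interpolate via $S_t = (1-t)S_1 + tS_2$ and attempt to integrate a time-dependent, fiber-preserving vector field whose flow conjugates $\pi_1$ to $\pi_2$. The difficulty is that the standard Moser trick breaks down at the critical point, where the action develops its logarithmic singularity and the relevant generating functions are only defined on a punctured neighborhood of $b$. The crucial technical input is that the obstruction to producing a global generating function for the interpolating flow is a class in a sheaf of \emph{flat} functions on $B$ (flat because $(S_1)_\infty = (S_2)_\infty$), and that such a class is realized by an honest smooth function flat at $b$; this in turn integrates to a fibered symplectomorphism that extends smoothly across the singular fiber.

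A Borel-type summation argument (or a direct construction using cutoff functions on $B$) produces the required flat function from its infinite jet, after which the Moser flow is well-defined and the resulting symplectomorphism carries the standard model of $\pi_1$ near the singular fiber to that of $\pi_2$. Composing with $\psi_0$ outside, appropriately damped across an annular region, yields the desired global fiber-preserving symplectomorphism and completes the classification.
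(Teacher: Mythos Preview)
The paper does not give its own proof of this statement: \autoref{thm:focus-focus_classification} is quoted as a background result due to V\~u Ng\d{o}c \cite{Ngoc02}, with no argument supplied beyond the citation. So there is nothing in the paper to compare your proposal against; the authors simply invoke the theorem later (in the proof of \autoref{thm:Gamma-bar_is_immersed_Lagrangian}) to transport the local analysis of the model neighborhood $X_S$ to an arbitrary focus-focus neighborhood.

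That said, your sketch is roughly in the spirit of V\~u Ng\d{o}c's argument, but it diverges in emphasis and has a soft spot. V\~u Ng\d{o}c does not run a Moser deformation along an interpolation $S_t=(1-t)S_1+tS_2$; rather, he first uses Eliasson's normal form to put a neighborhood of the critical point into the standard model, then shows directly that any focus-focus neighborhood is fibered-symplectomorphic to one of the explicit models $X_S$ (the construction the paper reviews after the theorem), and finally checks that $X_{S_1}$ and $X_{S_2}$ are equivalent precisely when $S_1-S_2$ is flat at the origin, because a flat difference can be realized as the generating function of a fiber-preserving symplectomorphism that extends smoothly across the singular fiber. Your Moser-with-flat-obstruction picture is morally the same endgame, but the step ``matching the regularized actions \ldots\ produces a fiberwise symplectomorphism $\psi_0$'' is doing a lot of work you have not justified: equal actions up to a flat function do not by themselves hand you a symplectomorphism of the regular locus, and the patching of $\psi_0$ with the Eliasson chart across an annulus is exactly where the invariant $(S)_\infty$ enters and must be handled carefully. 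If you want a self-contained write-up, it is cleaner to follow V\~u Ng\d{o}c's route via the model $X_S$ rather than an interpolation argument.
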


Any given $S$ can be realized as the invariant of a \textbf{model neighborhood} by an explicit construction starting with the standard focus-focus system.
\begin{eg}
Given a function $S$ on the $2$-disc, denote its two partial derivatives as $S_1=\partial_1S$ and $S_2=\partial_2S$.
Take the subset 
\begin{equation}
    X\coloneqq\{(p,q)\in \C^2\; : \; |pq| < \epsilon\}
\end{equation}
in the standard focus-focus system.
We can again use projection to $|p|,|q|$-plane to illustrate this, as in \autoref{fig:model_neighborhood} below.

\begin{figure}
\begin{tikzpicture}[decoration={
    markings,
    mark=at position 0.5 with {\arrow{>}}}
    ]
\filldraw[fill=lightgray,opacity=0.5,draw=none,domain=1.5:4.0] (0,4) -- (1.5,4) plot (\x,{6/(\x)}) to[out=-45,in=45] (4,0.75) to[out=-135,in=135] (4,0) -- (0,0) -- (0,4);
\filldraw[fill=lightgray,draw=none,domain=1.5:1.8] (0,4) -- (1.5,4) plot (\x,{6/(\x)}) -- (0,6/1.8) -- (0,4);
\filldraw[fill=lightgray,draw=none,domain=1.5:1.8] (4,1.5) to[out=-45,in=45] (4,0.75) to[out=-135,in=135] (4,0) -- (5,0) to[out=135,in=-135] (5,0.6) to[out=45,in=-45] (5,1.2) to[out=170,in=-10] (4,1.5);
\draw[thick,domain=1.3:5.5] plot (\x,{6/(\x)});
\draw[->,thick] (0,0) -- (0,5);
\draw[->,thick]  (0,0) -- (6,0);
\draw[thick] (0,4) -- (1.5,4);
\node at (-0.7,4) [above] {};
\draw[thick] (4,1.5) to[out=-45,in=45] (4,0.75) to[out=-135,in=135] (4,0);
\node at (4.5,4.5) [above right] {};
\node at (1.5,1.5) {};
\node[label=left: $\sigma_1$] at (4.5,-0.25) {};
\node[label=left: $\sigma_2$] at (-0.002,4) {};
\node[label=left:$ \left| p\right|$] at (6.5,-0.5) {};
\node[label=left:$ \left| q\right|$] at (-0.5,5) {};
\node[label=left:${\left| pq\right|=\epsilon}$] at (6,1.75) {};
\end{tikzpicture}
\caption{
\label{fig:model_neighborhood}
The model neighborhood.
}
\end{figure}
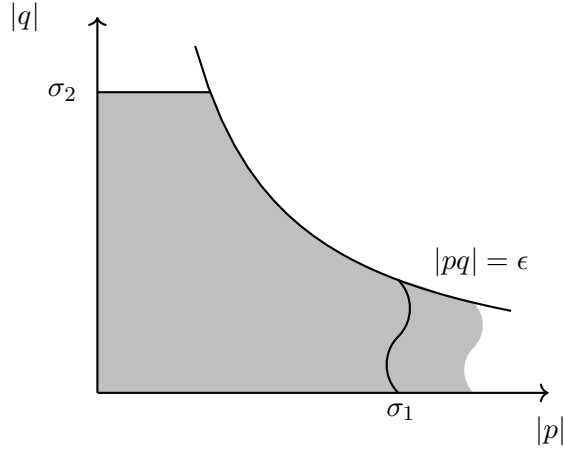

Consider the following Lagrangian sections: 
\begin{align}
    \sigma_0(b) & \coloneqq (1,-b), \\
    \sigma_1(b) &\coloneqq \bigl(\phi_{H_1}^{S_1} \circ \phi_{H_2}^{S_2}\bigr)(\sigma_0)=\bigl(e^{S_1(b)+iS_2(b)},-e^{-S_1(b)+iS_2(b)}b\bigr),
    \nonumber \\
    \sigma_2(b) & \coloneqq(-\bar{b},1).
    \nonumber
\end{align}
Note that $\sigma_1$ is a Lagrangian section because $\partial_1S_2=\partial_2S_1$, which is automatically satisfied since $S$ is a smooth function.
Now let $\sigma_1$ and $\sigma_2$ flow by time $t=(t_1,t_2)\in [0,\delta) \times [0,2\pi)$ (this corresponds to the dark shaded area in \autoref{fig:model_neighborhood}).
Consider 
\begin{equation}
    X'=\{(p,q)\in X\;\;|\;\;|q| \leq 1,\;|p| < e^{S_1(-\bar{p}q)+\delta}\},
\end{equation}
which is represented by the total shaded area in \autoref{fig:model_neighborhood}, and form the quotient space $X_S\coloneqq X'/\sim$ by identifying $\bigl(\phi_{H_1}^{t_1} \circ \phi_{H_2}^{t_2}\bigr)(\sigma_1(b)) \sim \bigl(\phi_{H_1}^{t_1} \circ \phi_{H_2}^{t_2}\bigr)(\sigma_2(b))$ for any $t=(t_1,t_2)\in [0,\delta) \times [0,2\pi)$ and $|b| < \epsilon$ (this identifies the two separate dark shaded areas in \autoref{fig:model_neighborhood} using Hamiltonian flow).
As this identifies the local Liouville coordinates, it is a symplectomorphism.
Thus, the symplectic form descends to $X_S$, and so does the map $H$. Since construction identifies $\sigma_1$ and $\sigma_2$, they together descend to a Lagrangian section defined on $X_S$, we will denote it as $\sigma_S\colon D_{\epsilon} \rightarrow X_S$.

Note that in our resulting $X_S$, on a regular fiber it takes time
\begin{equation}
    t(b)=(S_1(b)-\ln|b|,S_2(b)+\arg(b)-\pi)
\end{equation}
for $\sigma_2$ to flow to $\sigma_1$, i.e.\ for $\sigma_S$ to go through a period.
\null\hfill$\triangle$
\end{eg}

From this one can also see that given a focus-focus neighborhood, the associated invariant $S$ is given by the travel time along the local Hamiltonian flow from $\sigma_0$ to $\sigma_2$ outside of the focus-focus chart.

We will denote this Hamiltonian system as $H\colon X_S \to D_{\epsilon} \subset \C=\R^2$ and call it the \textbf{model neighborhood associated to the invariant $S$}.
In the following, we will denote its singular fiber as $F_s$ and its singular point (i.e.\ the origin) as $s$.

\subsection{Group structure}
\label{sec:group_structure}

We first define the symplectic group structure on a smooth Lagrangian submersion.
With a choice of Lagrangian section of a smooth Lagrangian submersion, we can always define good coordinates, called Liouville coordinates, by choosing an arbitrary local trivialization on the base.

\begin{defn}
Let $H\colon (M,\omega) \rightarrow \mathbb{R}^{n}$ be a complete integrable Hamiltonian system, let $B \subseteq \mathbb{R}^{n}$ be an open set, and let $\sigma\colon B \rightarrow M$ be a local Lagrangian section.
Define
\begin{equation}
    \Psi\colon B \times \mathbb{R}^{n} \rightarrow M,
    \quad
    \Psi(b,t)\coloneqq\phi_{H}^{t}(\sigma(b)).
\end{equation}
Then \(\Psi\) is a local diffeomorphism and $\Psi^{*} \omega=\sum d b_{i} \wedge d t_{i}$, where \(\left(b_{1}, \ldots, b_{n}\right)\) are the standard coordinates on \(B \subseteq \mathbb{R}^{n}\).
This coordinate system is called \textbf{Liouville coordinates}.
\null\hfill$\triangle$
\end{defn}

\begin{rmk}
It is possible to give a coordinate-free version of the Liouville coordinates, as arising from a map $T^* B \to M$.
We work in coordinates because this will make the discussion of the local model near focus-focus singularities clearer.
\null\hfill$\triangle$
\end{rmk}

Note that only when the section is Lagrangian can the pullback symplectic form become standard.
Using a choice of Liouville coordinates, we can define the operation of fiberwise addition.

\begin{defn}
Given two points $x,y\in F_b$ together with a choice of local trivialization on the base, define the \textbf{symplectic group structure} by
\begin{equation}
    (x+y)_{\sigma} \coloneqq \phi_{H}^{t+t'}(\sigma(b)),
\end{equation}
where the coordinates $t$ and $t'$  are such that $\phi_{H}^{t}(\sigma(b))=x$ and $\phi_{H}^{t'}(\sigma(b))=y$.
\null\hfill$\triangle$
\end{defn}

This group structure is independent of the choice of local trivialization.
Indeed, two different choices of local trivialization will differ by a base diffeomorphism, which will result in a linear transformation between their $t$ coordinates.
Since this transformation only depends on the base, i.e.\ remains constant on each fiber, the group structure remains the same.
The symplectic group structure is Abelian with $\sigma$ as the identity element.

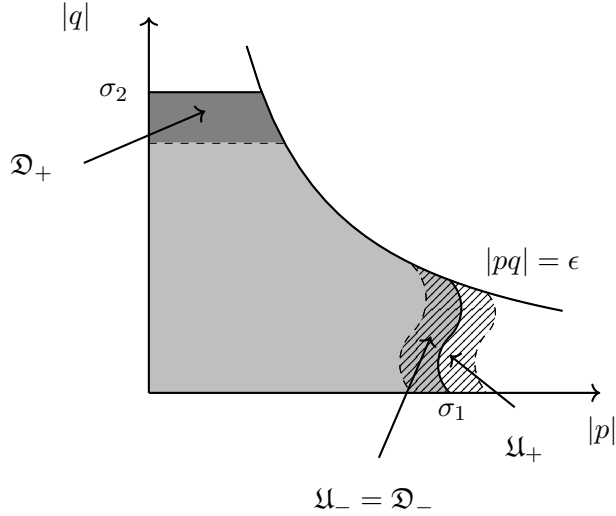
\begin{figure}
\begin{tikzpicture}[decoration={
    markings,
    mark=at position 0.5 with {\arrow{>}}}
    ]
\filldraw[fill=lightgray,opacity=0.5,draw=none,domain=1.5:4.0] (0,4) -- (1.5,4) plot (\x,{6/(\x)}) to[out=-45,in=45] (4,0.75) to[out=-135,in=135] (4,0) -- (0,0) -- (0,4);
\filldraw[fill=gray,opacity=0.5,draw=none,domain=1.5:1.8] (0,4) -- (1.5,4) plot (\x,{6/(\x)}) -- (0,6/1.8) -- (0,4);
\filldraw[fill=white,draw=none,domain=1.5:1.8] (4,1.5) to[out=-45,in=45] (4,0.75) to[out=-135,in=135] (4,0) -- (5,0) to[out=135,in=-135] (5,0.6) to[out=45,in=-45] (5,1.2) to[out=170,in=-10] (4,1.5);
\draw[thick,domain=1.3:5.5] plot (\x,{6/(\x)});
\draw[dashed] (0,3.32) -- (1.8,3.32);
\draw[->,thick] (0,0) -- (0,5);
\draw[->,thick]  (0,0) -- (6,0);
\draw[thick] (0,4) -- (1.5,4);
\node at (-0.7,4) [above] {};
\draw[thick] (4,1.5) to[out=-45,in=45] (4,0.75) to[out=-135,in=135] (4,0);
\node at (4.5,4.5) [above right] {};
\draw[dashed] (3.5,1.7) to[out=-45,in=45] (3.5,0.8) to[out=-135,in=135] (3.5,0);
\draw[dashed] (4.5,1.32) to[out=-45,in=45] (4.5,0.8) to[out=-135,in=135] (4.5,0);
\node at (4.5,4.5) [above right] {};
\node at (1.5,1.5) {};
\node[label=left: $\sigma_1$] at (4.5,-0.25) {};
\node[label=left: $\sigma_2$] at (-0.002,4) {};
\node[label=left:$ \left| p\right|$] at (6.5,-0.5) {};
\node[label=left:$ \left| q\right|$] at (-0.5,5) {};
\node[label=left:${\left| pq\right|=\epsilon}$] at (6,1.75) {};
\path[name path=A, draw, dashed] (3.5,1.7) to[out=-45,in=45] (3.5,0.8) to[out=-135,in=135] (3.5,0);
\path[name path=B, draw, dashed] (4.5,1.32) to[out=-45,in=45] (4.5,0.8) to[ out=-135,in=135] (4.5,0);
\path[name path=C, draw] (0,0) -- (6,0);
\path[name path=D, draw,domain=1.3:5.5] plot (\x,{6/(\x)});
\tikzfillbetween[of=A and B]{pattern=north east lines};
\node[label=left:$\mathfrak{D}_+$] (S) at (-1,3) {};
\draw[->, thick] (S) -- (0.75,3.75);
\node[label=below:${\mathfrak{U}_-=\mathfrak{D}_-}$] (T) at (3,-1) {};
\draw[->, thick] (T) -- (3.75,0.75);
\node[label=below:$\mathfrak{U}_+$] (R) at (5,-0.3) {};
\draw[->, thick] (R) -- (4,0.5);
\end{tikzpicture}
\caption{
\label{fig:charts_for_X_S}
Charts for $X_S$.
}
\end{figure}

We now examine the symplectic group structure in our model neighborhood $X_S$.
For the sake of later discussion, we will use the following charts to describe $X_S$, as illustrated in \autoref{fig:charts_for_X_S}.
These charts are
\begin{align}
    \D^0 & \coloneqq \{(p,q)\in X\;\;|\;\;|q|< 1,|p|< e^{S_1(-\bar{p}q)}\}, \\
    \mathfrak{U} &\coloneqq \{(p,q)\in X\;\;|\;\; e^{S_1(-\bar{p}q)-\delta}<|p|<e^{S_1(-\bar{p}q)+\delta}\}, \nonumber 
\end{align}
where $\D^0$ is the entire shaded region from $\sigma_1$ to $\sigma_2$ in \autoref{fig:charts_for_X_S} and $\U$ is the dashed region.
We have maps
$\phi_0\colon \D^0 \hookrightarrow X \rightarrow X_S$
and $\phi_1\colon \U \hookrightarrow X \to X_S$ that are diffeomorphisms onto open subsets of $X_S$ since they embed into $X$, then descend to $X_S$.
Note that their intersection in $X_S$ is the disjoint union of two open subsets
\begin{equation}
    \phi_0(\D^0)\cap \phi_1(\U)=V_- \cup V_+,
\end{equation}
where
\begin{align}
    \phi_0^{-1}(V_+) &\coloneqq\D_+\coloneqq\{(p,q)\in X\;\;|\;\; e^{-\delta}<|q|<1\}, \\
    \phi_1^{-1}(V_+)&\coloneqq\U_+\coloneqq\{(p,q)\in X\;\;|\;\; e^{S_1(-\bar{p}q)}<|p|<e^{S_1(-\bar{p}q)+\delta}\}, \nonumber \\
    \phi_0^{-1}(V_-)&\coloneqq\D_-\coloneqq\{(p,q)\in X\;\;|\;\; e^{S_1(-\bar{p}q)-\delta}<|p|<e^{S_1(-\bar{p}q)}\}, \nonumber \\
    \phi_1^{-1}(V_-)&\coloneqq\U_-\coloneqq\{(p,q)\in X\;\;|\;\; e^{S_1(-\bar{p}q)-\delta}<|p|<e^{S_1(-\bar{p}q)}\}. \nonumber
\end{align}
The transition functions between the two charts are given by
\begin{align}
    (p,q)& \in \D_+ \xmapsto{\phi_0 \circ \phi_1^{-1}} (e^{S_1(-\bar{p}q)+iS_2(-\bar{p}q)}/\bar{q},\;\bar{p}q^2e^{-S_1(-\bar{p}q)+iS_2(-\bar{p}q)})\in\U_+, \\
    (p,q) & \in \D_- \xmapsto{\phi_0 \circ \phi_1^{-1}} (p,q) \in \U_-, \nonumber \\
    (p,q) &\in \U_+ \xmapsto{\phi_1 \circ \phi_0^{-1}} (p^2\bar{q}e^{-S_1(-\bar{p}q)-iS_2(-\bar{p}q)},\;e^{S_1(-\bar{p}q)-iS_2(-\bar{p}q)}/\bar{p})\in\D_+, \nonumber \\
    (p,q) & \in \U_- \xmapsto{\phi_1 \circ \phi_0^{-1}} (p,q) \in \D_-. \nonumber
\end{align}

In order to take care of the quotient, we call the region given by
\begin{equation}
    \D\coloneqq\{(p,q)\in X\;\;|\;\;|q|< 1,\; |p|\leq e^{S_1(-\bar{p}q)}\}
\end{equation}
\textbf{the formal domain}.
Note that $\D$ can be identified with $X_S$ set-theoretically.

If we take out the singular fiber $F_s$, we obtain a smooth Lagrangian submersion
\begin{equation}
    H|_{X_S\backslash F_s}\colon X_S\backslash F_s \to D_{\epsilon}\backslash \{0\}\subset \R^2,
\end{equation}
where $D_{\epsilon}$ is the open disk with radius $\epsilon$ in $\C$ centered at the origin.
By choosing $\sigma_S$ as our Lagrangian section, we can define a group structure on the regular fibers (see \autoref{r4.3} for discussion on a different choice of Lagrangian section).

We may describe the addition formula in $\D$ as follows: using \autoref{eq:hamiltonian_flows}, in $X$, when we use $\sigma_1$ as the origin, addition takes the form
\begin{equation}
\label{Add1}
    (p_1,q_1)+(p_2,q_2)=(e^{-S_1(b)-iS_2(b)}p_1p_2,\;e^{S_1(b)-iS_2(b)}q_1\slash \bar{p}_2).
\end{equation}
When we choose $\sigma_2$ as the origin, addition takes the form
\begin{equation}
\label{Add2}
    (p_1,q_1)+(p_2,q_2)=(p_1\slash\bar{q}_2,q_1q_2),
\end{equation}
where $-\bar{p_1}q_1=-\bar{p_2}q_2=b \neq 0$.
By requiring all of the points involved to lie in $\D$, we can get a genuine addition on $X_S$ after identification.
The following result asserts that addition on $X_S\backslash F_s$ is entirely described by the two formulae in the following lemma.

\begin{lem}
Given two arbitrary points $x,y\in X_S$ with coordinates $x=(p_1,q_1), y=(p_2,q_2)$ in $\D$, we have 
\begin{align}
    (x+y)_{\sigma_1} & \in \D \iff |e^{-S_1(b)-iS_2(b)}p_1p_2| > |b| \iff |q_1q_2| < e^{-S_1(b)}|b|, \\
    (x+y)_{\sigma_2} & \in \D \iff |q_1q_2| \geq e^{-S_1(b)}|b|. \nonumber
\end{align}
\end{lem}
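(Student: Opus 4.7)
The plan is to substitute the addition formulae~\eqref{Add1} and~\eqref{Add2} into the two defining inequalities of the formal domain
\[
\D=\{(p,q)\in X : |q|<1,\ |p|\leq e^{S_1(-\bar p q)}\},
\]
and to simplify using the fiber relation $|p_i||q_i|=|b|$ (valid since $x,y$ both project to $b\ne 0$). The hypothesis $x,y\in\D$ gives $|p_i|\leq e^{S_1(b)}$ and $|q_i|<1$, which via the fiber relation is also equivalent to the lower bounds $|q_i|\geq e^{-S_1(b)}|b|$ and $|p_i|>|b|$. Thus on the fiber $F_b$ the intersection $\D\cap F_b$ is an annulus $\{e^{-S_1(b)}|b|\leq |q|<1\}$, so membership in $\D$ is decided by a single sharp inequality once the point is known to lie on $F_b$.

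For the first statement I would set $(P,Q):=(x+y)_{\sigma_1}$ as given by~\eqref{Add1}, verify by direct calculation that $-\bar P Q = -\bar p_1 q_1 = b$, and observe that the resulting relation $|P||Q|=|b|$ immediately yields the middle equivalence $|P|>|b|\Leftrightarrow |Q|<1$. Rewriting $|Q|=e^{S_1(b)}|q_1|/|p_2|$ and substituting $|p_2|=|b|/|q_2|$ converts $|Q|<1$ into $|q_1q_2|<e^{-S_1(b)}|b|$, which is the outer equivalence. It remains to check that the other defining inequality $|P|\leq e^{S_1(b)}$ holds automatically; substituting $|P|=e^{-S_1(b)}|b|^2/(|q_1q_2|)$ converts it into $|q_1q_2|\geq e^{-2S_1(b)}|b|^2$, which follows at once from $|q_i|\geq e^{-S_1(b)}|b|$.

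For the second statement I would set $(P',Q'):=(x+y)_{\sigma_2}=(p_1/\bar q_2,\,q_1q_2)$ as in~\eqref{Add2}, and again check $-\bar{P'}Q'=b$. The condition $|Q'|<1$ reads $|q_1q_2|<1$, which is automatic from $|q_i|<1$. Hence the only active defining inequality is $|P'|\leq e^{S_1(b)}$; substituting $|P'|=|b|/(|q_1||q_2|)$ gives $|q_1q_2|\geq e^{-S_1(b)}|b|$, as claimed.

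There is no real obstacle; the proof is a careful bookkeeping exercise, with the mild asymmetry that the active defining inequality of $\D$ swaps between the two cases (it is $|q|<1$ in the $\sigma_1$ case and $|p|\leq e^{S_1(b)}$ in the $\sigma_2$ case). The resulting inequalities on $|q_1q_2|$ are complementary (strict versus non-strict), which is precisely what is needed for the sum to be well-defined on $X_S$ after the gluing identification between $\phi_0$ and $\phi_1$ on the overlap $V_-\cup V_+$.
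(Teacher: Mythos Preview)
Your proposal is correct and follows the same approach as the paper, which simply states that ``the above inequalities can be obtained by direct computation'' and then gives a geometric heuristic (via the $H_1$-flow picture) for why exactly one of $(x+y)_{\sigma_1}$, $(x+y)_{\sigma_2}$ lies in $\D$. You have actually carried out that direct computation in full, correctly identifying which defining inequality of $\D$ is automatic in each case and which is active, and your closing remark on the complementarity of the strict and non-strict conditions on $|q_1q_2|$ is exactly the content of the paper's flow argument, obtained here algebraically.
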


\begin{proof}
The above inequalities can be obtained by direct computation. In fact, take two points that are close to $\sigma_1$ in $\D$ and consider $(x+y)_{\sigma_2}$ (see \autoref{fig:addition_in_D}).
By the counter-clockwise direction of the Hamiltonian flow of $H_1$, both $x$ and $y$ are too ``positive'' in $t_1$ coordinates for $(x+y)_{\sigma_2}$ to lie in $\D$.
In order for it to formally lie in $\D$, we can flow $(x+y)_{\sigma_2}$ backwards in the $t_1$ direction for a full period; i.e., we consider $\phi_{H_1}^{-S_1(b)+ln|b|}((x+y)_{\sigma_2})$.
This point will definitely lie in $\D$, thus defining a valid point under the identification of $X_S$ and $\D$.
This procedure has the same effect as formally switching to $\sigma_1$ and using $(x+y)_{\sigma_1}$ as the addition point instead.

Thus, for any $x,y \in \D$, only one of the points $(x+y)_{\sigma_1}$ and $(x+y)_{\sigma_2}$ can be in $\D$ since they differ by a period in $t_1$.
Also, one of them has to lie in $\D$ since they cannot be away from $\D$ for more than one period in the $t_1$ coordinate.
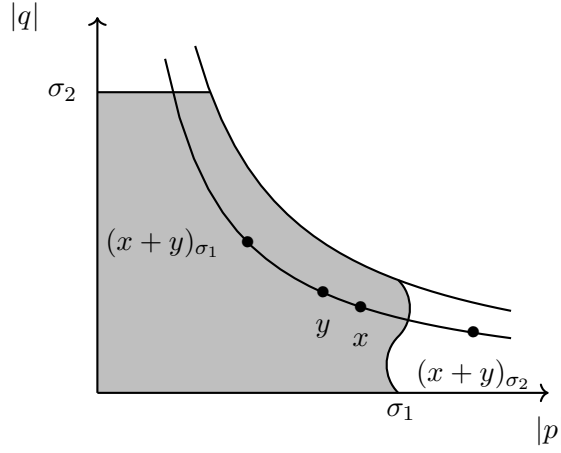
\begin{figure}
\begin{tikzpicture}[decoration={
    markings,
    mark=at position 0.5 with {\arrow{>}}}
    ]
\filldraw[fill=lightgray,opacity=0.5,draw=none,domain=1.5:4.0] (0,4) -- (1.5,4) plot (\x,{6/(\x)}) to[out=-45,in=45] (4,0.75) to[out=-135,in=135] (4,0) -- (0,0) -- (0,4);
\draw[thick,domain=1.3:5.5] plot (\x,{6/(\x)});
\draw[thick,domain=0.9:5.5] plot (\x,{4/(\x)});
\draw[->,thick] (0,0) -- (0,5);
\draw[->,thick]  (0,0) -- (6,0);
\draw[thick] (0,4) -- (1.5,4);
\node at (-0.7,4) [above] {};
\draw[thick] (4,1.5) to[out=-45,in=45] (4,0.75) to[out=-135,in=135] (4,0);
\node at (4.5,4.5) [above right] {};
\node at (1.5,1.5) {};
\node[label=left: $\sigma_1$] at (4.5,-0.25) {};
\node[label=left: $\sigma_2$] at (-0.002,4) {};
\node[label=left:$ \left| p\right|$] at (6.5,-0.5) {};
\node[label=left:$ \left| q\right|$] at (-0.5,5) {};
\node[label=below: $x$] at (3.5,1.14) {\(\bullet\)};
\node[label=below: $y$] at (3,1.33) {\(\bullet\)};
\node[label=below: ${(x+y)_{\sigma_2}}$] at (5,0.8) {\(\bullet\)};
\node[label=left: ${(x+y)_{\sigma_1}}$] at (2,2) {\(\bullet\)};
\end{tikzpicture}
\caption{
\label{fig:addition_in_D}
Addition of points in $\D$.
}
\end{figure}
\end{proof}

Now we consider the induced ``group structure" on the singular fiber by considering the limit of the above equations on $\{pq=0\} \cap \D$.

\begin{prop}\label{p3.5}
The symplectic group structure extends to a smooth map
\begin{equation} \label{eq:fiberwise_addition_inclusing_singular_points}
    \bigl(X_S\times_{D_\epsilon}X_S\bigr)\setminus\{(s,s)\} \to X_S,
\end{equation}
over the projection to $D_\epsilon$.
\end{prop}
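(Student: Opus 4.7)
On the regular locus $X_S^{\mathrm{reg}} \times_{D_\epsilon} X_S^{\mathrm{reg}}$, the addition is smooth because it is produced by the smooth Hamiltonian flow, so the task is to verify smoothness at pairs $(x,y)$ for which at least one point lies in the singular fiber $F_s$, yet $(x,y) \neq (s,s)$. The plan is to work in the formal domain $\D$ (identified with $X_S$ as a set) and exploit the two explicit addition formulas \eqref{Add1}, \eqref{Add2}. Each is a rational expression that is manifestly smooth on the complement of one coordinate hyperplane: \eqref{Add1} is smooth wherever $p_2 \neq 0$, and \eqref{Add2} is smooth wherever $q_2 \neq 0$. Crucially, both formulas extend across $b = 0$ without any issue as long as the respective non-vanishing condition holds.

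\textbf{Case analysis.} For a pair $(x_0, y_0)$ with $y_0 \neq s$, the coordinates $(p_{2,0}, q_{2,0})$ of $y_0$ in $\D$ are not both zero (since $s$ corresponds to $(0,0)$); choose a neighborhood of $y_0$ on which the corresponding formula applies, giving a smooth local expression for $x + y$. For pairs $(x_0, y_0)$ with $y_0 = s$ but $x_0 \neq s$, I would invoke the commutativity of fiberwise addition — inherited from the abelian group structure on regular fibers and extending by continuity to the full domain — to reduce to the previous case after swapping $x$ and $y$. The pair $(s,s)$ is excluded precisely because both formulas become indeterminate of the form $0/0$ there, reflecting a genuine failure of continuity.

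\textbf{Main obstacle.} The smoothness of each individual formula is immediate; the real work is the bookkeeping of the quotient identifications in $X_S$. One must use the preceding lemma to decide, for each input, whether \eqref{Add1} or \eqref{Add2} produces a representative lying in $\D$, noting that when both formulas are simultaneously defined they differ by a full $H_1$-period and therefore represent the same point of $X_S$. Near the singular stratum this requires a case-by-case analysis according to whether $y_0$ lies on the $p$-axis branch of $F_s$, the $q$-axis branch, on the identification circle between $\sigma_1$ and $\sigma_2$, or at $s$; in each case one must verify that the selected formula lands in $\D$ and depends smoothly on the input, and that on overlaps the smooth local formulas assemble into a single smooth map from $(X_S \times_{D_\epsilon} X_S) \setminus \{(s,s)\}$ to $X_S$.
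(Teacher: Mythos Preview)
Your proposal is correct and follows essentially the same approach as the paper: extend the explicit formulas \eqref{Add1} and \eqref{Add2} to the singular fiber by continuity, noting that each is smooth on the locus where the relevant coordinate ($p_2$ resp.\ $q_2$) is nonzero, and that together these loci cover everything except $(s,s)$. The paper's proof is terser---it simply writes out the limiting formulas on $F_s$ and observes they are well-defined---while your version is more explicit about the smoothness verification and the quotient bookkeeping, but the underlying argument is the same.
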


\begin{proof}
Consider first the case where $x,y$ are both regular points on the singular fiber; that is, $|p_1|^2+|q_1|^2, |p_2|^2+|q_2|^2 \neq 0$.
By allowing one of the coordinates to be zero in \autoref{Add1} and \autoref{Add2}, we formally obtain
\begin{align}
    (p_1,0)+(p_2,0) & =(p_1p_2e^{-S_1(0)-iS_2(0)},0), \\
    (0,q_1)+(0,q_2) & =(0,q_1q_2), \nonumber \\
    (p,0)+(0,q) & =
    \begin{cases}
        (p\slash\bar{q},0), & |p\slash q|\leq e^{S_1(0)}, \\ 
        (0,e^{S_1(0)-S_2(0)}q \slash \bar{p}), & |p\slash q|> e^{S_1(0)}.
    \end{cases} \nonumber
\end{align}

Thus, the group structure on the regular fiber induces a well-defined Abelian group structure on the regular part of the singular fiber.
Next, allowing one of the two points to be the singular point (i.e.\ $(0,0) \in \D$), we get
\begin{align}
    (p,0)+(0,0) &= (0,0), \\
    (0,q)+(0,0) &= (0,0). \nonumber
\end{align}
This tells us that the singular point plus any regular point will give the singular point.
So far we have shown that addition can be defined for $F_s \times F_s - \{(s,s)\}$.
\end{proof}

\begin{rmk}
In fact, \autoref{eq:fiberwise_addition_inclusing_singular_points} defines, as expected, a group structure on the regular part of the singular fiber, making it isomorphic to $(\C^*,\times)$, under the following explicit identification
\begin{equation}
    \C^* \rightarrow F_s, \;\;\;z\in \C^* \mapsto
    \begin{cases}
        (ze^{S_1(0)+iS_2(0)},0) & \text{if} \quad |z|\leq 1\\ 
        (0,1/\bar{z}) & \text{if} \quad |z|> 1.
    \end{cases}.
\end{equation}
Here in the formula we identifies $F_s$ with $\{pq=0\} \cap \D$. Under this identification, the whole singular fiber corresponds to the result of identifying $0$ and $\infty$ in $\C\cup \{\infty\}$, with the singular point corresponding to 0 and $\infty$.
This explains why a singular point plus a regular point is well-defined: $0 \times a=0$ and $\infty \times a=\infty$ for $a\in \C^*$.
We can also see why a singular point plus a singular point can give any point on the singular fiber: $b \times a\slash b=a$ for $a,b\in \C^*$.
Taking $b\mapsto 0$ we get $0 \times \infty =a$.
This is consistent with the geometry of the induced group structure on $A_1$-singular fibers for elliptic fibrations \cite{Kod63}.
\null\hfill$\triangle$
\end{rmk}

\section{The addition graph}
\label{sec:addition_graph}

\subsection{$\bar\Gamma$ is an immersed Lagrangian correspondence}
\label{subsec:Gamma_is_immersed}

In light of \autoref{p3.5}, to encode the addition information we make the following definition.

\begin{defn}
Define $\Gamma_\sigma \subset (M \times M)^- \times M$ to be the addition graph with respect to the Lagrangian section $\sigma$, where it is well-defined:
\begin{equation}
    \Gamma_{\sigma}\coloneqq\{(x,y,z)\in M \times M \times M\;\;|\;\; \pi(x)=\pi(y)=\pi(z),(x+y)_{\sigma}=z\}.
\end{equation}
Denote its closure by $\bar{\Gamma}_{\sigma}$.
\null\hfill$\triangle$
\end{defn}

In the following, we will use $\Gamma$ (or $\bar{\Gamma}$) to mean the addition graph (or its closure) without specifying the section we choose.
For the model neighborhood $X_S$, we denote its addition graph (with respect to $\sigma_S$) by $\Gamma_S$ and the closure by $\bar{\Gamma}_S$.
We write $X^3_S$ for the triple Cartesian product $X_S\times X_S\times X_S$.

\begin{lem}
\label{lem:Gamma_S_is_immersed}
$\bar{\Gamma}_S$ is an immersed submanifold of $X_S^3$.
\end{lem}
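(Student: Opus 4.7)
The graph $\Gamma_S$ is, by construction, the graph of the smooth fiberwise-addition map defined on the open set $(X_S\times_{D_\epsilon}X_S)\setminus\{(s,s)\}$, so it is already a smoothly embedded $6$-dimensional submanifold of $X_S^3$ away from the fiber over $(s,s)$. The plan is to understand which points are added when one takes the closure and to produce explicit local parametrizations near those points by smooth immersions. From \autoref{p3.5} and the formulas \autoref{Add1} and \autoref{Add2}, the set $\bar\Gamma_S\setminus\Gamma_S$ consists of triples $(x,s,s)$ with $x\in F_s$, triples $(s,y,s)$ with $y\in F_s$, and triples $(s,s,z)$ with $z\in F_s$, and these three families of closure points meet precisely at the triple $(s,s,s)$.

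The central step is to construct two smooth immersion charts near $(s,s,s)$. Starting from formula \autoref{Add1} and eliminating $q_1,q_2$ in favor of $(p_1,p_2,q_3)$ using the fiber-product condition, I would define
\[
\Phi_A(p_1,p_2,q_3)\coloneqq\bigl((p_1,\;e^{-S_1(b)+iS_2(b)}q_3\bar p_2),\;(p_2,\;e^{-S_1(b)+iS_2(b)}\bar p_1q_3),\;(e^{-S_1(b)-iS_2(b)}p_1p_2,\;q_3)\bigr),
\]
where $b$ is determined as a smooth function of $(p_1,p_2,q_3)$ near the origin by the implicit function theorem applied to the equation $b+e^{-S_1(b)+iS_2(b)}\bar p_1\bar p_2 q_3=0$ (the linearization in $b$ at the origin is the identity). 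Symmetrically, from formula \autoref{Add2} I would read off the polynomial map
\[
\Phi_B(p_3,q_1,q_2)\coloneqq\bigl((p_3\bar q_2,\,q_1),\;(p_3\bar q_1,\,q_2),\;(p_3,\,q_1q_2)\bigr).
\]
Each of these is manifestly a smooth embedding on its domain: the projection of $\Phi_A$ onto the slots ``first coordinate of $x$, first coordinate of $y$, second coordinate of $z$'' is the identity of $\mathbb{C}^3$, and similarly for $\Phi_B$ onto ``second of $x$, second of $y$, first of $z$''.

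To complete the proof, I would verify three things. First, both charts land in $\bar\Gamma_S$: by construction they satisfy the fiber-product and addition relations where these make classical sense, and the degenerate specializations such as $\Phi_A(0,0,q_3)=(s,s,(0,q_3))$, $\Phi_B(p_3,0,0)=(s,s,(p_3,0))$, and $\Phi_A(p_1,0,0)=((p_1,0),s,s)$ reproduce the closure points identified in \autoref{p3.5}. Second, the two images together cover $\bar\Gamma_S$ in a neighborhood of $(s,s,s)$, which I would check by splitting into the cases of regular points of $\Gamma_S$ (according to which of $(x+y)_{\sigma_1}$ and $(x+y)_{\sigma_2}$ lies in $\D$) and the three families of boundary points (according to which axis of $F_s$ contains the nontrivial coordinate). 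Third, the two images intersect only at $(s,s,s)$: setting $\Phi_A(p_1,p_2,q_3)=\Phi_B(p_3',q_1',q_2')$ and equating coordinates forces, via a short algebraic manipulation, every input to vanish. Away from $(s,s,s)$, either $\Gamma_S$ itself or exactly one of $\Phi_A$ and $\Phi_B$ provides a local embedded chart, so that the disjoint union of the two charts, identified with $\Gamma_S$ on their overlaps with it, supplies a smooth $6$-manifold immersing onto $\bar\Gamma_S$. I expect the main obstacle to be the case analysis in the covering verification, since one must correctly handle the identification underlying $X_S$ and track which of the two addition formulas gives a representative lying in $\D$.
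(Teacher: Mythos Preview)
Your approach is essentially the paper's: the two parametrizations $\Phi_A$ and $\Phi_B$ are exactly the maps the paper calls $F_1$ and $F_2$ (composed with the chart $\phi_0^{\times 3}$), and the key observation---that these two branches meet only at $(s,s,s)$---is the same.  Two issues, however, need attention.

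First, a small slip: by \autoref{p3.5} the addition is already defined at $(x,s)$ and $(s,y)$ for $x,y\in F_s\setminus\{s\}$, and gives $s$.  Hence the triples $(x,s,s)$ and $(s,y,s)$ with $x,y\neq s$ already lie in $\Gamma_S$, not only in its closure.  The genuine closure locus is just $\{(s,s,z):z\in F_s\}$, a copy of the pinched torus sitting over the pair $(s,s)$.

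Second, and more substantively, your two charts together with $\Gamma_S$ do not cover this closure locus.  The maps $\Phi_A$ and $\Phi_B$ take values in $\D^0\times\D^0\times\D^0$; on the closure stratum they hit $(s,s,(0,q_3))$ with $|q_3|<1$ and $(s,s,(p_3,0))$ with $|p_3|<e^{S_1(0)}$ respectively, but neither reaches the seam circle where $\sigma_1$ and $\sigma_2$ are identified in $X_S$.  The point $(s,s,\sigma_S(0))\in\bar\Gamma_S$ therefore lies in no chart of your atlas.  You correctly flag the identification underlying $X_S$ as the likely obstacle, but the fix has to be made explicit: one needs at least one further chart in which the third factor lands in $\U$ rather than $\D^0$.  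The paper supplies exactly this, introducing charts $E_3,\dots,E_6$ (in particular $f_6=(\phi_0\oplus\phi_0\oplus\phi_1)\circ F_2$) to cover the points where one of the three entries lies on the section circle $\sigma_S$.  Once that chart is added, your gluing of $\Gamma_S$ with the local parametrizations goes through and yields the same $\widetilde\Gamma_S$ the paper constructs.
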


\begin{proof}
We will directly build a smooth manifold $\widetilde{\Gamma}_S$ by defining its coordinate charts and transition maps, together with an immersion map $i_S$ onto $\bar{\Gamma}_S$.
For the sake of simplicity, we will assume $S\equiv 0$.
The same argument can be easily applied to an arbitrary $S$, see \autoref{r4.3}.
In order to describe the closure, we can express the addition with respect to $\sigma_1$ equivalently as  
\begin{equation}\label{e4.1}
    (a,b,c) \xmapsto{F_1} (\bar{a},-bc,\bar{b},-ac,\bar{a}\bar{b},-c)
\end{equation}
and the addition when choosing $\sigma_2$ as zero section
\begin{equation}\label{e4.2}
    (a,b,c) \xmapsto{F_2} (-\bar{b}\bar{c},a,-\bar{a}\bar{c},b,-\bar{c},ab).
\end{equation}

Without specifying the actual domain of $(a,b,c) \in \C^3$, by comparing with \autoref{e4.1} with \autoref{Add1}, and \autoref{e4.2} with \autoref{Add2} when $S \equiv 0$, we can see the image of $F_1, F_2$ agree with the graphs of the two addition maps on regular fibers (i.e.\ when $abc\neq 0$).
By setting $abc=0$, we can actually obtain the full induced map on the singular fiber, including where the addition map is not well-defined, i.e.\ $(s,s,F_s)$.
To see this, suppose $a=b=0$.
The image of $F_1$ gives $(0,0,0,0,0,-c)$, and by properly choosing the value of $c$ this gives a point lying in $(s,s,F_s)$ after identifying under quotient.
    
To build coordinate charts to cover $\bar{\Gamma}_S$, we need to consider open subsets to cover $X_S$ instead of $\D$.
By restricting each of the three points in the image of $F_1, F_2$ to lie inside $\D^0$ or $\U$, which covers our $X_S$ after taking the quotient, we can get a corresponding domain for $(a,b,c)$.
Restricting the map $F_1$ or $F_2$ to the above domains gives maps into $X'$ that descend to $X_S$ .
    
For example, if we restrict the image of $F_1$ to lie in $\D^0\times\D^0\times\D^0$,  we will have the corresponding domain $\{(a,b,c) \in D_1^3\;|\;|abc|<\epsilon\}$, where $D_1=\{z\in \C\;|\;|z|<1\}$ is the (open) unit disk.
Then the map $(\phi_0 \oplus \phi_0 \oplus \phi_0)\circ F_1$ will be an injective map into $\bar{\Gamma}_S$.
In the same way, we can obtain six embedding whose image we denote $N_i$, defined as follows:
\begin{align}
    E_1 \coloneqq \left\{(a,b,c) \in \C^3 \bigg\rvert|a|,|b|,|c||<1;|abc|<\epsilon\right\} & \xrightarrow{f_1 \coloneqq ( \phi_0 \oplus \phi_0 \oplus \phi_0)\circ F_1} \bar{\Gamma}_S, \\
    E_2 \coloneqq  \left\{(a,b,c) \in \C^3 \bigg\rvert |a|,|b|,|c||<1; |abc|<\epsilon\right\} & \xrightarrow{f_2 \coloneqq  (\phi_0 \oplus \phi_0 \oplus \phi_0)\circ F_2} \bar{\Gamma}_S, \nonumber \\
    E_3\coloneqq \left\{(a,b,c) \in \C^3 \bigg\rvert e^{-\delta}<|c|,|ac|,|bc|<e^{\delta};|abc|<\epsilon\right\} & \xrightarrow{f_3 \coloneqq  (\phi_1 \oplus \phi_1 \oplus \phi_1)\circ F_1} \bar{\Gamma}_S, \nonumber \\
    E_4\coloneqq \left\{(a,b,c) \in \C^3 \bigg\rvert e^{-\delta}<|b|<e^{\delta};|a|,|c|,|ab|,|bc|<1;|abc|<\epsilon\right\} & \xrightarrow{ f_4 \coloneqq (\phi_0 \oplus \phi_1 \oplus \phi_0)\circ F_1} \bar{\Gamma}_S, \nonumber \\
    E_5\coloneqq \left\{(a,b,c) \in \C^3 \bigg\rvert e^{-\delta}<|a|<e^{\delta};|b|,|c|,|ab|,|ac|<1;|abc|<\epsilon\right\} &  \xrightarrow{  f_5 \coloneqq (\phi_1 \oplus \phi_0 \oplus \phi_0)\circ F_1} \bar{\Gamma}_S, \nonumber \\
    E_6 \coloneqq  \left\{(a,b,c) \in \C^3 \bigg\rvert e^{-\delta}<|c|<e^{\delta};|b|,|a|,|bc|,|ac|<1;|abc|<\epsilon\right\}  & \xrightarrow{ f_6 \coloneqq (\phi_0 \oplus \phi_0 \oplus \phi_1)\circ F_2} \bar{\Gamma}_S. \nonumber
\end{align}

We claim that the manifolds $E_i$ for $1\leq i\leq 6$ can be glued together via transition maps to give a smooth manifold $\widetilde{\Gamma}_S$, and that the maps $f_i$ defined on each chart of $\widetilde{\Gamma}_S$ can be glued together to give an immersion $i_S\colon \widetilde{\Gamma}_S \rightarrow \bar{\Gamma}_S$ onto the image.
   
First, $F_1$ and $F_2$ are smooth injective maps.
By computation we can see that the Jacobian matrix of $F_1$ and $F_2$ are also injective.
Since $\D^0$ or $\U$ embeds into $X_S$, $f_i$ gives an embedding of an open subset of $\C^3$ into $X_S^3$ that lies in $\bar{\Gamma}_S$.

Second, $\bigcup_{i=1}^{6}N_i=\bar{\Gamma}_S$.
By the reasoning in \autoref{subsec:geometry_of_correspondence}, the image of $f_1$ and $f_2$ cover all of the points on $\bar{\Gamma}_S$ except for the ones that concern our zero section circles, i.e.\ $\Sigma_1\coloneqq\{(p,q)\in X\;|\;|q|=1\}$ or equivalently $\Sigma_2\coloneqq\{(p,q)\in X \;|\; |p|=e^{S_1(-\bar{p}q)}\}$.
The points are $(x,\sigma_S,x)$, $(\sigma_S,x,x)$, $(\sigma_S,\sigma_S,\sigma_S)$ and $(x,y,\sigma_S)$, where $x \in X_S\backslash \sigma_S$ and $x,y \in X_S$ have to satisfy the addition law $(x+y)_{\sigma_S}=s$.
These points are covered by the rest of the charts: $f_3$ covers $(\sigma_S,\sigma_S,\sigma_S)$, $f_4$ covers $(x,\sigma_S,x)$, $f_5$ covers $(\sigma_S,x,x)$ and $f_6$ covers $(x,y,\sigma_S)$.
Finally, note that $(s,s,s)\in \bar{\Gamma}_S$ is covered only by $f_1$ and $f_2$ and a calculation shows that this is the only common point in their image, i.e.\ $N_1 \cap N_2=(s,s,s)$.

Now, we define $\widetilde{\Gamma}_S$.
We will glue $E_i$ together by identifying the preimage of the same point in $\bar{\Gamma}_S$, except for $(s,s,s)$.
As a set, define $\widetilde{\Gamma}_S=\bar{\Gamma}_S \cup \{s'\}$, where $s'$ is a single point.
Define $N_i' \subset \widetilde{\Gamma}_S$ for $1\leq i\leq 6$ to be $N_1'=\{s'\}\cup N_1\backslash (s,s,s)$ and $N_i'=N_i$ for $i>1$.
Here we are identifying $\bar{\Gamma}_S$ to a subset of $\widetilde{\Gamma}_S$.
Note that $\bigcup_{i=1}^{6}N_i'=\widetilde{\Gamma}_S$ since $\bigcup_{i=1}^{6}N_i=\bar{\Gamma}_S$ and the origins in $E_1$ and $E_2$ both map to $(s,s,s)=N_1 \cap N_2 \subset \bar{\Gamma}_S$.
We have $N_1' \cap N_2'=\emptyset$ and $N_i' \cap N_j'=N_i\cap N_j$ otherwise.

Next we define coordinate maps $\psi_i\colon N_i'\rightarrow E_i \subset \R^6 (i=1,...6)$ to be $\psi_1(s')=(0,0,0) \in E_1$ and $\psi_i(x)=f_i^{-1}(x)$ otherwise. Again here we identifies $\bar{\Gamma}_S$ to a subset of $\widetilde{\Gamma}_S$.
Under these coordinate maps, we have that $\psi_i(N_i'\cap N_j')$ is always open. Whenever $N_i'\cap N_j' \neq \emptyset$, the transition map $\psi_i^{-1}\circ \psi_j\colon \psi_i(N_i'\cap N_j') \rightarrow \psi_j(N_i'\cap N_j')$ is always smooth, which can be easily verified.
These together give a smooth manifold structure on $\widetilde{\Gamma}_S$.

Finally, we define the immersion map $i_S\colon \widetilde{\Gamma}_S \rightarrow \bar{\Gamma}_S$ by $i_S\colon \bar{\Gamma}_S \subset \widetilde{\Gamma}_S \xrightarrow{id} \bar{\Gamma}_S$ and $i_S(s')=(s,s,s)$.
Note that this is equivalent to defining the map on each coordinate chart $E_i$ to be $f_i$.
This defines a smooth map that has injective derivative, thus an immersion.
Moreover, $i_S$ is injective on $\widetilde{\Gamma}_S \backslash i_S^{-1}(s,s,s)$, and $(s,s,s)$ is a double point.
Since $\text{Im}\;dF_1|_{(0,0,0)}+\text{Im}\;dF_2|_{(0,0,0)}=\C^6 \cong T_{(s,s,s)}X_S^3$, $\bar{\Gamma}_S$ has a transversal self-intersection point at $(s,s,s)$.
\end{proof}

\begin{rmk}\label{r4.3}
The above argument works for arbitrary $S$ in the similar way, as well as for a different choice of Lagrangian section.
For a different choice of $S$, we would use with \autoref{e4.1} modified with $S(abc)$ in its formula.
That said, the Jacobian stays injective.
Moreover, the charts will stay the same except for the domain: they will be defined using the definition of $\D^0$ and $\U$ with again $S$ involved.
But, the conditions stay open, so the domains are still open subsets of $\C^3$.
All the other reasoning works the same as above.

If we replace the Lagrangian section, then we would use the fact that, for a different section $\sigma_S'$, there exists $t(b)$ such that $\phi_{H}^{t(b)}(\sigma_S(b))=\sigma_S'(b)$.
Then a simple computation shows 
\begin{equation}
    (x+y)_{\sigma_S}=\phi_{H}^{t(b)}((x+y)_{\sigma_S'}).
\end{equation}
Denote the addition graph with respect to $\sigma_S'$ as $\Gamma_S'$.
The Hamiltonian flow $\phi_H^{t(b)}$ is a self-diffeomorphism on $X_S$, thus the map \begin{equation}
    (x,y,(x+y)_{\sigma_S})\in \bar{\Gamma}_S \mapsto \bigl(x,y,(x+y)_{\sigma_S'}=\phi_H^{t(b)}((x+y)_{\sigma_S})\bigr)\in \bar{\Gamma}_S'
\end{equation}
is a diffeomorphism onto the image when restricted on $\Gamma_S$.
Note that this map also sends $(s,s,s)$ to itself.
Then it is easy to see that the composition of the above map with $i_S$ gives an immersion map from $\widetilde{\Gamma}_S$ onto $\bar{\Gamma}_S'$.
\null\hfill$\triangle$
\end{rmk}

Now, based on the result for the local model, we have the following theorem.
In the statement of the theorem, $(M \times M)^- \times M$ is the symplectic manifold $M^3$ equipped with the symplectic form $\tilde{\omega}\coloneqq(-\omega)\oplus (-\omega)\oplus\omega$, which we assume is equipped with a singular torus fibration over $B$, with focus-focus singularities on distinct fibers.

\begin{thm}
\label{thm:Gamma-bar_is_immersed_Lagrangian}
$\bar{\Gamma}$ is an immersed Lagrangian submanifold of $(M \times M)^- \times M$ with transversal self-intersection double points at each triple $(s,s,s)$, where $s$ is a critical point.
\end{thm}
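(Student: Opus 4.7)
The plan is to bootstrap the local structure established in \autoref{lem:Gamma_S_is_immersed} to the global manifold, using crucially the hypothesis that the focus-focus critical points lie on pairwise distinct fibers. This hypothesis implies that, for any triple $(x,y,z) \in \bar{\Gamma}$, the common base point $b = \pi(x) = \pi(y) = \pi(z)$ is either a regular value of $\pi$, in which case $(x,y,z)$ lies entirely in the smooth part of $\Gamma$ and is covered by Subotic's result, or else is the critical value of a unique singular point $s$, in which case all three of $x,y,z$ lie in a single focus-focus neighborhood of $F_s$. Thus, no triple in $\bar{\Gamma}$ involves more than one singular fiber.

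First I would fix, for each critical value $b_j \in B$ with focus-focus singular point $s_j$, an open focus-focus neighborhood $U_j \supset F_{s_j}$ whose image under $\pi$ is a disk containing no other critical value. By \autoref{thm:focus-focus_classification}, there is a fibered symplectomorphism $\Phi_j\colon U_j \xrightarrow{\sim} X_{S_j}$ for some germ $S_j$, and by \autoref{r4.3} any discrepancy between the chosen global Lagrangian section and $\sigma_{S_j}$ amounts to a fibered diffeomorphism of $X_{S_j}$ that preserves the immersed structure. Thus $\Phi_j$ sends $\bar{\Gamma} \cap U_j^3$ bijectively onto a diffeomorphic copy of $\bar{\Gamma}_{S_j}$. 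Applying \autoref{lem:Gamma_S_is_immersed} and transporting back through $\Phi_j^{-1}$ produces a smooth manifold $\widetilde{\Gamma}_j$ and an immersion $\widetilde{\Gamma}_j \to U_j^3$ with image $\bar{\Gamma} \cap U_j^3$, having a single transversal self-intersection at $(s_j, s_j, s_j)$. On the open complement $M^\circ := M \setminus \bigcup_j F_{s_j}$, Subotic's theorem provides a smooth embedded Lagrangian correspondence $\Gamma \cap (M^\circ)^3$.

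To assemble the global manifold $\widetilde{\Gamma}$, I would glue the local pieces $\widetilde{\Gamma}_j$ to $\Gamma \cap (M^\circ)^3$ along the overlap $\bar{\Gamma} \cap (U_j \setminus F_{s_j})^3$; on that overlap both descriptions agree set-theoretically with the graph of fiberwise addition among regular points, so the gluing is forced and smooth. The Lagrangian condition then follows by continuity: the pullback of $\tilde{\omega}$ to $\widetilde{\Gamma}$ is smooth and vanishes on the dense open subset lying over $(M^\circ)^3$ by Subotic, hence vanishes identically. The transversal double-point statement at each $(s_j, s_j, s_j)$ transfers directly from \autoref{lem:Gamma_S_is_immersed}. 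The main obstacle is the central identification step: making precise that the classification symplectomorphism $\Phi_j$ can be chosen compatibly with the globally defined Lagrangian section $\sigma$ up to a local section-change, so that $\Phi_j$ genuinely conjugates $\bar{\Gamma} \cap U_j^3$ with $\bar{\Gamma}_{S_j}$ rather than only with some locally constructed graph. Once the scope of \autoref{r4.3} is pinned down to absorb this ambiguity, the gluing and continuity arguments are routine.
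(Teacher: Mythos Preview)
Your proposal is correct and follows essentially the same approach as the paper: combine \autoref{thm:focus-focus_classification} with \autoref{lem:Gamma_S_is_immersed} (together with \autoref{r4.3} for the section change) to obtain the immersed structure near each singular fiber, and use Subotic's result that $\Gamma$ is Lagrangian plus the fact that the Lagrangian condition is closed to conclude for $\bar{\Gamma}$. The paper's proof is more terse, but the ingredients and logic are the same; the obstacle you flag is exactly what \autoref{r4.3} is meant to absorb.
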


\begin{proof}
In order to produce an immersion $\widetilde\Gamma \hookrightarrow M^3$ whose image is $\bar\Gamma$, and which has a transversal double point at each $(s,s,s)$ and no other immersed points, we can combine \autoref{thm:focus-focus_classification} with \autoref{lem:Gamma_S_is_immersed}.

\cite[Lemma 5.0.33]{Sub10} shows that $\Gamma$ is Lagrangian.
Since being Lagrangian is a closed condition, $\bar{\Gamma}$ is also Lagrangian.
\end{proof}

\begin{rmk}
As $\bar{\Gamma}$ has isolated transversal double points and no other singularities, and the map from $\widetilde{\Gamma}$ is a diffeomorphism away from this point, we see that any immersion $i_N\colon N \rightarrow M^3$ with image $\bar{\Gamma}$ must factor through $\widetilde{\Gamma}$.
\null\hfill$\triangle$
\end{rmk}

\subsection{Geometry of the correspondence}
\label{subsec:geometry_of_correspondence}

In this section, we study the geometry of $\widetilde{\Gamma}$ and $\bar{\Gamma}$, starting with the observation that $M\times_B M$ and $\bar{\Gamma}$ are closely related.
Indeed, the map
\begin{equation}
    (x,y,z) \in \bar{\Gamma} \xmapsto{\pr} (x,y) \in M\times_B M
\end{equation}
is 1-1 everywhere except for pairs $(s, s)$, when $s$ is a critical points, whose preimage is the whole singular fiber $(s, s, F_s)$.

We first analyze $M\times_B M$.
This is a singular space, with isolated singularities $(s,s)$.
This can be seen by looking at the Jacobian of the defining equation $\{xy=zw\} \subset \C^4$ near the origin.
The fiber product $M\times_B M$ can also be seen as a singular fibration over $B$ by sending $(x,y)\in M \times_B M$ to $\pi(x)\in B$.
We denote this fibration map as $\pi'$.
This fibration is a regular fibration with fiber $T^2 \times T^2$ when taking out any singular fibers $F_s \times F_s$.
To see this, we can look at its tangent space.
Indeed, the tangent space of $(x,y) \in M \times_B M$ is $\{(v,w)\in T_xM \times T_yM\;|\;d\pi(v)=d\pi(w)\}$.
Thus $d\pi'|_{(x,y)}(v,w)=d\pi(v)=d\pi(w)$.
When $(x,y)$ are both regular, $v$ can be any vector in $T_xM$, thus $d\pi'$ is surjective.
When at least one of them is singular, e.g.\ at $(s,y)$, since $d\pi|_{s}$ is a zero map, so is $d\pi'|_{(s,y)}$.

The smooth manifold part of $M\times_B M$ embeds into $\widetilde{\Gamma}$ via the addition map. The complement of its image is $(\pr\circ i)^{-1}(s,s)\cong S^2$ if we denote the immersion map as $i:\widetilde{\Gamma} \rightarrow \bar{\Gamma}$.
The following lemma tells us what a neighborhood of this $S^2$ inside $\widetilde{\Gamma}$ looks like.

\begin{lem}
\label{lem:tubular_neighborhood}
A tubular neighborhood of $(\pr\circ i)^{-1}(s,s) \subset \widetilde{\Gamma}$ is diffeomorphic to a neighborhood of the zero section inside the total space of the bundle $O(-1)\oplus O(-1)$ over $\mathbb{P}^1$.
\end{lem}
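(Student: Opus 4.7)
As noted in the excerpt, $M\times_B M$ has a standard ordinary double point $\{xy=zw\}\subset\C^4$ at $(s,s)$, and the map $\widetilde{\Gamma}\to M\times_B M$ contracts $(\pr\circ i)^{-1}(s,s)\cong\mathbb{P}^1$ to this singular point. Since the small resolution of the conifold is $\mathcal{O}(-1)\oplus\mathcal{O}(-1)\to\mathbb{P}^1$, the plan is to verify, using the explicit charts from \autoref{lem:Gamma_S_is_immersed}, that our resolution is the small one. By \autoref{thm:focus-focus_classification} and \autoref{r4.3}, it suffices to work in the model $X_S$ with $S\equiv 0$; the general case follows by inserting $S$-dependent factors.

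First I would identify the exceptional $\mathbb{P}^1$ explicitly. In chart $E_1$, the preimage of $(s,s)$ is the disk $\{(0,0,c):|c|<1\}$, which maps to $(s,s,(0,-c))$ and whose center $c=0$ is the extra point $s'$. Similarly, in $E_2$ the preimage is a disk $\{(0,0,c):|c|<1\}$ that maps to $(s,s,(-\bar c,0))$, centered at the other preimage of $(s,s,s)$. The chart $E_6$ contributes the intermediate annulus. Matching images via the quotient identification $(p,0)\sim(0,q)$ with $p\bar q = 1$ on the singular fiber yields $c_{E_6}=1/c_{E_1}$, while the identity overlap between $E_2$ and $E_6$ gives $c_{E_2}=c_{E_6}$; together these glue the disks and annulus into the standard $\mathbb{P}^1$ with transition $c_{E_2}=1/c_{E_1}$. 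For the normal bundle, I would equate $f_1(a_1,b_1,c_1)=f_6(a_6,b_6,c_6)$ in full: using the transition formula $\phi_0\leftrightarrow\phi_1$ on $\D_+\leftrightarrow\U_+$ from \autoref{sec:group_structure} (which for $S\equiv 0$ sends $(p,q)\in\D_+$ to $(1/\bar q,\bar p q^2)\in\U_+$), a direct computation gives
\begin{equation*}
    (a_6,\,b_6,\,c_6) = (-b_1 c_1,\;-a_1 c_1,\;1/c_1).
\end{equation*}
Linearizing in the normal directions at fixed $c_1$ and diagonalizing via $u=a+b$, $v=a-b$ yields $u_6=-c_1 u_1$ and $v_6=c_1 v_1$. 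Each of these (up to a sign absorbed by a change of trivialization) is the transition function of $\mathcal{O}(-1)$ on $\mathbb{P}^1$, so the normal bundle splits as $\mathcal{O}(-1)\oplus\mathcal{O}(-1)$, consistent with the identity transition between $E_2$ and $E_6$.

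The conclusion then follows from the smooth tubular neighborhood theorem applied to this normal bundle. The main difficulty will be the careful bookkeeping of the transition between $E_1$ and $E_6$: the focus-focus charts involve conjugations in the target $X_S$, and the $\phi_0\leftrightarrow\phi_1$ transition also mixes conjugates, so one must track these correctly in order to obtain the clean polynomial formula above. Once this transition is in hand, identifying the two summands as $\mathcal{O}(-1)$ and invoking the tubular neighborhood theorem is routine.
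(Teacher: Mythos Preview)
Your proposal is correct and shares the paper's core computation: you recover exactly the transition map $\phi\colon(a,b,c)\mapsto(-bc,-ac,1/c)$ between $E_1$ and $E_6$ (with the identity between $E_2$ and $E_6$), which is precisely what the paper writes down. The identification of the exceptional $\mathbb{P}^1$ via the $c$-coordinate, and of the normal directions as the $(a,b)$-plane, also matches.

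The only difference is in the final packaging. You diagonalize the fiber transition via $u=a+b$, $v=a-b$, read off two copies of $\mathcal{O}(-1)$, and then invoke the tubular neighborhood theorem. The paper instead notes (implicitly) that because the transition is already \emph{linear} in $(a,b)$ for fixed $c$, the charts $E_1,E_2,E_6$ are literally a vector-bundle atlas over $\mathbb{P}^1$; it then writes down explicit maps
\[
G_1(a,b,c)=\bigl([1:-c],(b,-bc,a,-ac)\bigr),\qquad G_2(a,b,c)=\bigl([-c:1],(-ac,a,-bc,b)\bigr)
\]
into the standard description of $\mathcal{O}(-1)\oplus\mathcal{O}(-1)$ and checks directly that they glue. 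This avoids both the diagonalization and the appeal to the tubular neighborhood theorem, at the cost of writing down a concrete formula. Your route is a bit more conceptual; the paper's is more hands-on but self-contained. Either way, your ``linearization'' step is in fact vacuous here, since the transition is already linear in the normal directions---you could drop it and go straight to the bundle identification.
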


\begin{proof}
It suffices to work with $X_S$.
As before we will prove in the following for the $S \equiv 0$ case for simplicity of formulas and a similar argument works for arbitrary $S$ with a slight modification to the formulas involved.

Recalling the charts we constructed for $\widetilde{\Gamma}_S$ in proof of \autoref{lem:Gamma_S_is_immersed}, the following charts can cover a neighborhood of the $S^2$:
\begin{itemize}
\item
Chart 1, using $\sigma_1$ as the zero section to describe addition on $\D^0\times\D^0\times\D^0$:
\begin{equation}
    E_1=\left\{(a,b,c) \in \C^3 \bigg\rvert|a|,|b|,|c||<1;|abc|<\epsilon\right\}
\end{equation}

\item
Chart 2, using $\sigma_2$ as the zero section to describe addition on $\D^0\times\D^0\times\D^0$:
\begin{equation}
    E_2=\left\{(a,b,c) \in \C^3 \bigg\rvert |a|,|b|,|c||<1; |abc|<\epsilon\right\}
\end{equation}

\item
Chart 6, using $\sigma_2$ as the zero section to describe addition on $\D^0\times\D^0\times\U$:
\begin{equation}
    E_6=\left\{(a,b,c) \in \C^3 \bigg\rvert e^{-\delta}<|c|<e^{\delta};|b|,|a|,|bc|,|ac|<1;|abc|<\epsilon\right\}
\end{equation}
\end{itemize}
 
The coordinate changes on the overlapping part are defined by a map $\phi\colon (a,b,c) \mapsto (-bc,-ac,1/c)$ from
\begin{equation}
    E_{16}=\left\{(a,b,c) \in D_1^3 \bigg\rvert |abc|<\epsilon ; |c|>e^{-\delta}\right\} \subset E_1
\end{equation}
to
\begin{equation}
    E_{61}=\left\{(a,b,c) \in D_1^2 \times \C \bigg\rvert 1<|c|<e^{\delta};|bc|,|ac|<1;|abc|<\epsilon\right\} \subset E_6
\end{equation}
and the identity map from
\begin{equation}
    E_{26}=\left\{(a,b,c) \in D_1^3 \bigg\rvert |abc|<\epsilon ; |c|>e^{-\delta}\right\} \subset E_2
\end{equation}
to
\begin{equation}
    E_{62}=\left\{(a,b,c) \in D_1^3 \bigg\rvert |abc|<\epsilon ; |c|>e^{-\delta}\right\} \subset E_6.
\end{equation}
Note that $\phi\circ \phi = \mathrm{id}$.
We denote the resulting manifold (i.e.\ a neighborhood of $S^2 \subset \widetilde{\Gamma}$ by diffeomorphism) by $U$.

In order to establish a diffeomorphism, we use the following description of the bundle $O(-1)\oplus O(-1)$ over $\mathbb{P}^1 $: over a point $\lambda \in \mathbb{P}^1$, by the definition of the tautological line bundle, points in the fiber can be described as $(c_1,c_1z,c_2,c_2z)$ for $(c_1,c_2)\in \C^2$ if $\lambda =[1:z]$ for some $z\in \C$ or $(c_1w,c_1,c_2w,c_2)$ for $(c_1,c_2)\in \C^2$ if $\lambda =[w:1]$ for some $w \in \C$.

We can now define maps $G_i$ for $i=1,2$, given by
\begin{align}
    G_1\colon & (a,b,c)\in \C^3 \mapsto ([1:-c],(b,-bc,a,-ac)) \in V, \\
    G_2\colon & (a,b,c) \in \C^3 \mapsto ([-c:1],(-ac,a,-bc,b)) \in V. \nonumber
\end{align}
We use these maps to define a map $G\colon U \longrightarrow V$ by gluing $E_1 \xrightarrow{G_1} V$, $E_2 \xrightarrow{G_2} V$ with $E_6 \xrightarrow{G_2} V$.
These maps glue to a well-defined map $G$ since they are compatible with transition maps.
For instance, we have
\begin{equation}
    (a,b,c)\in E_{16} \xmapsto{\phi} (-bc,-ac,1/c) \in E_{61}\xmapsto{G_2} ([-1/c,1],(b,-bc,a,-ac)),
\end{equation}
which is equal to
\begin{equation}
    ([1,-c],(b,-bc,a,-ac))\xmapsto{G_1}(a,b,c)\in E_{16}.
\end{equation}

It is easy to verify that the map $G$ is injective and thus it is a diffeomorphism onto the image.
Moreover, $G$ maps $S^2$ to the zero section in $V$.
By taking $a=b=0$ in the $E_i$ charts for $i=1,2,6$ we get the $S^2\in \widetilde{\Gamma}$, which gets mapped to points of the form $([1:-c],(0,0,0,0))$ or $([-c:1],(0,0,0,0))$, which lie in the zero section.
If we restrict $G$ on the $S^2$, we get a diffeomorphism onto the zero section.

In this way we construct a neighborhood of the $S^2$ that is diffeomorphic to a neighborhood of zero section of $O(-1)\oplus O(-1)$ with the $S^2$ corresponding to the zero section.
\end{proof}

\begin{rmk}
The description of the above local structure can be considered as a minimal resolution of the singularity of $M \times_B M$, which is consistent with the complex analytical picture.
Consider instead $\pi\colon M \rightarrow B$ as a elliptic fibration with $A_1$ singularities.
The local model for an $A_1$ singularity is $(x,y)\in \C^2 \mapsto xy\in \C$.
This makes the local model of $M \times_B M$ the singular variety $Y=\{(x,z,w,y)\in \C^4\;|\;xy=zw\}$.
The following discussion is due to \cite{Atiyah58}.
    
To resolve the singular point at the origin, we can first do the usual blow up: consider the projectivization of $Y$, $V_2\coloneqq\{[x:z:w:y]\in \mathbb{P}^3\;|\;xy=zw\}$.
It is easy to see that $V_2 \cong \mathbb{P}^1 \times \mathbb{P}^1$ by $(\lambda,\lambda')\in \mathbb{P}^1\times \mathbb{P}^1 \mapsto [\lambda'\lambda:\lambda':\lambda:1] \in \mathbb{P}^3$.
Blowing up gives us a smooth variety $\tilde{Y}=\{(\lambda,\lambda',c)\in \mathbb{P}^1\times \mathbb{P}^1 \times \C^4 \;|\; \exists s\in \C:
c=(s\lambda'\lambda,s\lambda',s\lambda,s) \}$.
       
$\tilde{Y}$ can be further blown down: we can choose either to blow down the first $\mathbb{P}^1_{\lambda}$-factor or the second $\mathbb{P}^1_{\lambda'}$-factor, denoting the resulting varieties as $E_{\lambda'}$ and $E_{\lambda}$.
They are both smooth complex manifolds, especially $E_{\lambda}\cong O(-1)\oplus O(-1)$.

To see this, consider the natural projection of $(\lambda,c)\in E_{\lambda}\mapsto \lambda \in \mathbb{P}^1$.
When $\lambda \neq \infty$, we can represent $\lambda$ as $[w:1]$ for some $w\in \C$, then the preimage (under the projection map) of $\lambda$ is $(aw,a,bw,b)$ for $(a,b)\in \C^2$.
When $\lambda \neq 0$, we can represent $\lambda$ as $[1:z]$ for some $z\in \C$, then the preimage (under the projection map) of $\lambda$ is $(a,az,b,bz)$ for $(a,b)\in \C^2$.
This gives $E_{\lambda}\cong O(-1)\oplus O(-1)$.
       
They can be both seen as minimal resolutions of $Y$ by replacing the singular point with the exceptional curve $\mathbb{P}^1$.
$E_{\lambda}$ and $E_{\lambda'}$ are not isomorphic as complex manifolds but the normal bundle to the exceptional $\mathbb{P}^1$ are both $O(-1)\oplus O(-1)$.
\null\hfill$\triangle$
\end{rmk}

\bibliographystyle{alpha}
\bibliography{bib}


\end{document}